\begin{document}

\theoremstyle{plain}
\newtheorem{thm}{Theorem}[section]
\newtheorem{prop}[thm]{Proposition}
\newtheorem{lemma}[thm]{Lemma}
\newtheorem{prin}[thm]{Principle}
\newenvironment{thmLabeled}[1]
{\begin{thm}
\label{#1}{\thethm}
\label{page: #1}
}
{\end{thm}}

\newenvironment{propLabeled}[1]
{\begin{prop}
\label{#1}{\theprop}
\label{page: #1}
}
{\end{prop}}

\newenvironment{prinLabeled}[1]
{\begin{prin}
\label{#1}{\theprin}
\label{page: #1}
}
{\end{prin}}

\newenvironment{lemmaLabeled}[1]
{\begin{lemma}
\label{#1}{\thelemma}
\label{page: #1}
}
{\end{lemma}}

\theoremstyle{definition}
\newtheorem{defn}[thm]{Definition}
\newcommand{\definedterm}[1]{\textit{#1}}
\newenvironment{defLabeled}[1]
{\begin{defn}
\label{#1}{\thedefn}
\label{page: #1}
}
{\end{defn}}
\newtheorem{conv}[thm]{Convention}
\newtheorem{decl}[thm]{Declaration}

\theoremstyle{remark}
\newtheorem{obs}[thm]{Observation}
\newtheorem{notation}[thm]{Notation}
\newtheorem{rem}[thm]{Remark}
\newtheorem{exercise}[thm]{Exercise}
\newtheorem{example}[thm]{Example}
\newtheorem{prologue}[thm]{Prologue}
\newtheorem{discussion}[thm]{Discussion}
\begin{Large}

\newcommand\ms{\par\medskip}
\newcommand\bs{\par\bigskip}
\newcommand\nl{\newline}
\newcommand\np{\newpage}
\newcommand\cl{\centerline}
\newcommand\myframe[1]{ \fbox {\vtop{\vskip  6pt\hbox{\hskip  6pt {#1} \hskip  6pt}} } }
\newcommand\titles{\large\bf}
\newcommand\proves{\vdash }
\newcommand\bicond{\leftrightarrow }
\newcommand\cond{\rightarrow }

\begin{center}
\textbf{Andrew Wiles' Proof of Fermat's Last Theorem, As Expected, Does Not Require a Large Cardinal Axiom}

\bigskip
A Discussion of Collin McLarty's \textit{The Large Structures of Grothendieck
Founded on Finite-Order Arithmetic}
\cite{McLarty2020}
\footnote{Presentation to the Indiana University, Bloomington, Logic Seminar}

\bigskip
William Wheeler

\end{center}

\bigskip
\bigskip
Andrew Wiles' proof of Fermat's Last Theorem\cite{Wiles1995},
with an assist from Richard Taylor \cite{TaylorWiles1995}, focused renewed 
attention on the foundational question of whether the use of Grothendieck's 
\textit{Universes} in number theory
entails that the results proved therewith make essential use 
of the large cardinal axiom that there
is a  strongly inaccessible cardinal greater than $\aleph_0$, or more generally,
that every cardinal is less than some strongly inaccessible cardinal.
(The latter is equivalent over Zermelo-Fraenkel set theory with the Axiom of Choice (ZFC)
to Grothendieck's axiom U that every set is contained in a Grothendieck universe.)

Every number theorist, including Grothendieck himself, has believed that number theoretic
results proved using Grothendieck universes could be proved without using them if one were
willing to make the effort.  But, in print, few do.

If one traces back through the references in Wiles proof,
one finds that the proof does
depend upon explicit use of Grothendieck's universes in \cite{GrothendieckDieudonne1961} (see
\cite{McLarty2010}, page 362 (middle)).  Thus, \textit{prima facie}, it appears that the proof
of Fermat's Last Theorem depends upon a foundation that is strictly stronger than ZFC.

Colin McLarty in \cite{McLarty2020} removes this appearance by demonstrating that all of 
Grothendieck's ``large'' tools, i.e., entities whose construction depended upon Grothendieck's
universes, can instead be founded on a ``fragment of ZFC with the logical strength
of Finite-Order Arithmetic.

The goal of this presentation is to present overviews both of the history
of Fermat's Last Theorem and of McLarty's foundation for Grothendieck's large tools.

\bs
\section {\textbf{Milestones in the Proof of Fermat's Last Theorem}}

\bigskip
\begin{small}
\begin{tabular}{r|l|l|l|l|}
&&Algebraic&Analytic&\\
&Number&Number&Number&Algebraic\\
Date&Theory&Theory&Theory&Geometry\\
\hline
1630&Fermat's Conjecture&&&\\
&Proof for $n=4$&&&\\
\hline
1730&&&Euler studies&\\
&&&Real $\zeta(s)$&\\
\hline
1760&Euler proves&&&\\
&FLT for $p=3$&&&\\
\hline
1796&&&&Gauss using \\
&&&&Gaussian sums determines \\
&&&&number of solutions of\\
&&&&$ax^3-by^3 \equiv 1 mod p$\\
\hline
1801&&&&Gauss determines \\
&&&&number of solutions of\\
&&&&$ax^4-by^4 \equiv 1 mod p$\\
\hline
\end{tabular}
\newpage

\begin{tabular}{r|l|l|l|l|}
&&Algebraic&Analytic&\\
&Number&Number&Number&Algebraic\\
Date&Theory&Theory&Theory&Geometry\\
\hline
1825&Legendre and&&&\\
&Dirichlet independently&&&\\
&prove FLT for $p=5$&&&\\
\hline
1837&&&Dirichlet introduces&\\
&&&\textit{Dirichlet L-series}&\\
\cline{4-4}
&&&Riemann extends $\zeta{s}$&\\
&&&to a meromorphic function&\\
&&&on complex plane; proves &\\
&&&functional equation;&\\
&&&conjectures all nontrival&\\
&&&zeroes have real part $1/2$&\\
\hline
1839&Lam\'e proves&&&\\
&FLT for $p = 7$&&&\\
\hline
1847&Lam\'e presents &&&\\
&false proof for all $n$&&&\\
\cline{2-3}
&Kummer points out&Kummer initiates&\\
&Lam\'e's error&Algebraic Number&&\\
&&Theory&&\\
\hline
1930&Vandiver proves&&&\\
&FLT for $p < 157$&&&\\
\hline
1949&&&&Weil revived Gauss'\\
&&&&work and determines\\
&&&&number of solutions of\\
&&&&$a_0x_0^{n_0} + a_1x_1^{n_1} + \cdots a_rx_r^{n_r}=0$\\
&&&&in a finite field.\\
&&&&formulates the Weil Conjectures\\
\hline
1954&Vandiver proves&&&\\
&FLT for $p<2621$&&&\\
\hline
1955&&&&Taniyama proposes\\
&&&&initial version of the\\
&&&&Shimura-Taniyama-Weil\\
&&&&conjecture\\
\hline
1958&&&&Grothendieck announces\\
&&&&his cohomology results\\
&&&&at Int.~Cong.~of~Math.\\
\hline
1965&&&&Grothendieck proves\\
&&&&first and second\\
&&&&Weil conjectures\\
\hline
1960's&&&&Shimura and Weil refine\\
&&&&the Shimura-Taniyama-Weil\\
&&&&conjecture\\
\hline
1979&&&&Deligne proves third\\
&&&& Weil Conjecture\\
\hline
1982&&&&Frey conjectures\\
&&&&a solution of Fermat's\\
&&&&equation would yield\\
&&&&counterexample to\\
&&&&Shimura-Taniyama-Weil\\
&&&&conjecture\\
\hline
1986&&&&Ribet proves Frey's\\
&&&&conjecture, by then known\\
&&&&as the epsilon conjecture\\
\hline
1993&FLT proved for&&&\\
&$p<4{,}000{,}000$&&&\\
\hline
1994&&&&Wiles presents proof\\
&&&&of FLT at Cambridge\\
\hline
1995&&&&Wiles and Taylor\\
&&&&publish full proof of\\
&&&&FLT\\
\hline
\end{tabular}
\end{small}

\subsection {History 1630--1847}

In the 1630's, Fermat,in the margin of his copy of Diophantus' \textit{Arithmetica} (250 AD)
beside Problem 8 of Book II, (which asked to write a number that is a square
as the sum of two squares), wrote
\begin{quote}
It is impossible to separate a cube into two cubes or a fourth power into two fourth 
powers or, in general, any power greater than the second into powers of like degree. 
I have discovered a truly marvelous demonstration, which this margin is too narrow to contain.
(\cite{RibetHayes1994}, pp. 145--146)
\end{quote}

Fermat did publish a proof of his theorem for fourth powers (i.e, $n=4$) using infinite
descent.  

Note that if Fermat's Last Theorem holds for a positive integer $k$, then it holds
for all multiples $mk$
of $k$, because 
$$x^{mk} + y^{mk} = z^{mk} \quad\text{entails}\quad (x^m)^k + (y^m)^k = (z^m)^k\,.$$
Accordingly, because every integer greater than 2 is divisible by 4 or an odd prime,
and Fermat had proved his theorem for $n=4$, it remained ``only'' to prove the theorem for
odd primes.

Circa 1760, Euler published a proof for the case $p=3$, although there was a gap that
went unnoticed at the time.

Circa 1825, Legendre and Dirichlet independently proved the case $p=5$.

In 1839, Lam\'e published a proof for $p=7$.

So, prior to 1840, Fermat's Last Theorem had been proved only for $n=3$, $4$, $5$,and $7$
and their multiples.

In 1847, Lam\'e presented a purported proof for all $n$ to the March 1, 1847, meeting of the
Paris Academy of Sciences.  BUT it relied upon a false assumption that the unique factorization 
principle holds in the cyclotomic integers generated by the $p^{th}$ roots of unity (i.e., solutions
of $x^p = 1$) for every prime $p$ (\cite{Kleiner2000}, page 24).

Several months later, this error was pointed out by Kummer, who three years earlier in his 
dissertation had shown that unique factorization failed in general in rings of cyclotomic 
integers.  But by introducing \textit{ideal complex numbers}, unique factorization could 
be recovered.  With these, Kummer had been able to prove Fermat's Last Theorem for all
$n< 100$ (\cite{Kleiner2000}, page 24).

\subsection {Algebraic Number Theory, 1847--1994}

This led to the development of Algebraic Number Theory by Kummer and Dedekind.

This eventually led to proofs for $p<157$ by Vandiver by 1930 and then for $p<2521$ again
by Vandiver in 1954 with the assistance of an early computer.  The value of $p$ was
pushed to $p<4,000,000$ by 1993 {\cite{Kleiner2000}, page 27}.

\bigskip
\subsection{Analytic Number Theory} 

Leonhard Euler, in his thesis \cite{Euler1737}, investigate the real zeta function (i.e., 
a function from the real numbers to the real numbers)
defined by the
power series 
$$\zeta(s) = 1 + \frac{1}{2^s} + \frac{1}{3^s} + \frac{1}{4^s} + \frac{1}{5^s} + \cdots $$
and showed that 
\begin{align*}
\zeta(s) &=\frac{2^s\cdot 3^s \cdot 5^s \cdot 7^s\cdot 11^s \cdots}
{(2^s -1)(3^s-1)(5^s-1)(7^s-1) (11^s-1)\cdots} \\
&= \frac{1}{1-(1/2^s} \times \frac{1}{1-(1/3^s)} \times \frac{1}{1-(1/5^s)}
\times \frac{1}{1-(1/7^s)} \cdots \, .
\end{align*}
More concisely,
$$\zeta(s)= \sum_{n=1}^\infty \frac{1}{n^s} = \prod_{p~\text{prime}} \frac{1}{1-(1/p^s)} \,.$$
Several years earlier, in 1734, he had computed that  $\zeta(2) = \frac{\pi^2}{6}$.
Euler's work is considered by some to have been the start of analytic number theory.

In 1837, Peter Gustav Lejeune Dirichlet generalize Euler's $\zeta$ function to 
\textit{Dirichlet $L$-series}
$$L(s, \chi) = \sum_{n=1}^{\infty} \frac{\chi(n)}{n^s}$$ where $s$ is a complex number with real part > 1 and $\chi$ is a function from the integers into the complex numbers such that
\begin{enumerate}
\item $\chi(ab)=\chi(a)\chi(b)$ for all integers $a$ and $b$ and
\item for some positive integer $m$ (called the modulus of $\chi$) and for all integers $a$,
\begin{enumerate}
\item $\chi(a) =
\begin{cases} = 0&\text{if }  $gcd(a,m) > 1$\\
\ne 0&\text{if } $gcd(a,m)=1$\\
\end{cases}$, 
where $gcd$ denotes the greatest common divisor, and
\item $\chi(a + m) = \chi(a)$ .
\end{enumerate}
\end{enumerate}
Dirichlet introduced these these $L$-series for his proof that  any arithmetic progression
$\{a + bn : n \ge 0\}$ for which $a$ and $b$ are relatively prime contains infinitely many prime
numbers(\cite{Dirichlet1837}).  As for Euler's $\zeta$ function, which is a Derichlet $L$-series with $\chi(a)=1$
for all $a$ and with $s$ restricted to real numbers,
$$L(s,\chi) = \sum_{n=1}^\infty \frac{\chi(n)}{n^s}
=\prod_{p}\frac{1}{1-\chi(p)p^{-s}}$$ for complex $s$ with real part $> 1$, where the product
is over all primes. Derichlet also showed that the $L$-functions satisfy a functional 
equation.  Some consider this to have been the start of analytic number theory.

Also, in 1837, Bernhard Riemann (\cite{Riemann1837}) showed that the zeta function defined by
$$\zeta(s) = \sum_{n=1}^\infty \frac{1}{n^s} = \frac{1}{1^s} + \frac{1}{2^s} + \frac{1}{3^s} + \cdots$$ for all complex numbers $s$ with real parts > 1 could be continued analytically to 
a meromorphic function on the complex plane with a simple pole at $s=1$.  As before,
$$\zeta(s) = \sum_{n=1}^\infty \frac {1}{n^s} = \prod_{p\ \text{prime}} \frac{1}{1-p^{-s}}$$ for
$s$ with real part $> 1$.  Riemann showed that the zeta function satisfied a functional
equation
$$ \zeta(s) = 2^s \pi^{s-1} \sin\left(\frac{\pi s}{2}\right) \Gamma(1-s) \zeta(1-s)$$
where $\Gamma(s)$ is the gamma function.  Because of the $\sin$ factor, $\zeta(s) = 0$
for every negative even integer; these zeroes are know as the \definedterm{trivial zeroes}.
Riemann hypothesized that all other zeroes of the zeta function have a real part = 1/2.
This hypothesis, known today as the \definedterm{Riemann Hypothesis}, is still unproven
and is considered to be the most important unsolved problem in mathematics.

\bigskip
\subsection {Algebraic Geometry and the Weil Conjectures}
In 1796, on October 1, Carl Friedrich Gauss published a paper (\cite{Gauss358})
in which he introduced Gaussian sums
and determines the Gaussian sums of order 3 for a prime of the form $p=3n+1$
and also determined the numbers of solutions of all congruences of the form 
$ax^3 - by^3 \equiv 1 \mod p$.  He proceeded similarly in his 
first memoir on biquadratic residues (\cite{GaussBQR}) and determined the numbers
of solutions of all congruences of the form $ax^4 - by^4 \equiv 1 \mod p$ for primes
of the form $p=4n+1$.  

\bigskip
In 1949, Andre Weil (\cite{Weil1949}) undertook to renew attention to Gauss' work
by presenting a ``complete exposition'' (Weil's self-characterization)
of the numbers of solutions of equations of the form
$$a_0x_0^{n_0} + a_1x_1^{n_1} + \cdots + a_r x_r^{n_r} = b$$
in a finite field. 

This exposition motivates and concludes with four conjectures concerning the zeroes
of varieties defined over a finite field $k=F_q$ with $q$ elements.
These conjectures are now known as the ``Weil conjectures''.
First, Weil defined an analogue of the Riemann zeta function as follows.  
Let $X$ be a finite set of polynomial
equations with coefficients in $k$.  
Let $\tilde k$ be the algebraic closure of $k$.  
For integer $r \ge 1$, let $N_r$ be the number of solutions of $X$
in the subfield $k_r = F_{q^r}$ of $\tilde k$ (where $k_r = F_{q^r}$ has $q^r$ elements).
Define a zeta function $Z(t)$  by
$$Z(t) = Z(X; t) = \exp\left(\sum_{r=1}^\infty \left(N_r\frac{t^r}{r}\right)\right)$$.
Weil's conjectures concerning this zeta function were:
\begin{enumerate} 
\item $Z(t)$ is a rational function of $t$, i.e., it is a quotient of polynomials 
with rational coefficients.
\item $Z(t)$ satisfies a specific functional equation.
\item $Z(t)$ satisfies an analogue of the Riemann hypothesis.
\item $Z(t)$ determines the correct Betti numbers.
\end{enumerate}

Although described here for varieties, the Weil conjectures pertain more broadly to arbitrary
schemes.

Weil had (already) proved his conjectures for curves (\cite{Weil1948}).
\bigskip

For higher dimensional varieties, the first conjecture concerning the rationality of the zeta
function and the second conjecture on its functional equation were proved by B.~Dwork in 1960
(\cite{Dwork1960}).

\bigskip
A.~Grothendieck, inspired by some ideas of J.~P.~Serre, began his development of \'etale cohomology (with Emil Artin) announced in \cite{Grothendieck1958} and exposited in \cite{GrothendieckDieudonne1961}.
This led to Grothendieck's proof of the rationality of the zeta function
and its functional equation for general schemes in \cite{Grothendieck1965}.

\bigskip
In 1974, P.~Deligne proved the general analogue of the Riemann hypotheses for schemes
(\cite{Deligne1974}).

\bigskip
\subsection {Elliptic Curves and Modular Curves}
The study of elliptic curves and modular curves has a long history in mathematics

An elliptic curve is a "plane curve" that is given by an equation of the form
$$y^2=x^3 +ax^2 + bx+c$$ where $a$, $b$, and $c$ are integers, or rational numbers, 
or real numbers, or even complex numbers, and the cubic polynomial on the right side
has distinct roots

A cubic curve is said to be modular if it has a finite covering by a modular curve of the form $X_0(N)$.
The full details are quite technical.  In essence, modularity entails that there is a formula
for the number of solutions of the cubic equation in each finite field (\cite{Cipra1994}).

For a clear, fuller discussion, the reader is referred 
to the survey article by Henri Darmon (\cite{Darmon1999}).

The Shimura-Taniyama-Weil conjecture, the first version of which was put forth by Taniyama in
1955, and which was
subsequently refined and clarified by Shimura and Weil in the 1960's,
asserted that every elliptic curve is modular.  

In 1982, Gerhard Frey conjectured that if $a^p + b^p = c^p$ for nonzero integers $a$, $b$, and $c$
and a prime $p > 2$, i.e., if $a$, $b$, $c$ and $p$ were a counterexample to Fermat's Last Theorem,
then the elliptic curve $y^2 = x(x-a^p)(x+b^p)$ would not be modular,
i.e., would be a counterexample to the Shimura-Taniyama-Weil conjecture (\cite{Frey1982}).
These curves are now referred to as \textit{Frey Curves}.
Frey's conjecture was refined by J.-P.~Serre, and became known as the \textit{Epsilon Conjecture}.

In 1986, Kenneth Ribet proved the \textit{Epsilon Conjecture} (\cite{Ribet1990}).

\bigskip
\subsection {Andrew Wiles -- Finally a Proof}

\bigskip
The Frey curves possess a technical property referred to as semistability.

In June, 1994, Andrew Wiles presented his proof that any semistable elliptic curve is modular
at a conference in Cambridge.(\cite{Kleiner2000},  pp. 27-35).
Thus, no Frey curve exists, and this proves Fermat's Last Theorem.

There was a gap in Wiles' original proof, that was fixed as a result of discussions between Wiles
and his former student Richard Taylor.

The finished proof of Fermat's Last Theorem is contained in 
\cite{Wiles1995} and \cite{TaylorWiles1995}.

\bigskip
\subsection {Post Wiles -- The Modularity Theorem}

\bigskip
Using ideas from Wiles, Christophe Breuil, Brian Conrad, Fred Diamond, and Richard Taylor,
in a series of papers (\cite{Diamond1996}, \cite{ConradDiamondTaylor1999},
\cite{BreuilConradDiamondTaylor2001}) completed the proof of the 
Shimura-Taniyama-Weil conjecture, i.e., that every elliptic curve over the rationals is
modular.  The conjecture is now known as the \textbf{Modularity Theorem}.

\bigskip\bigskip
\section  {\textbf{McLarty's Foundations for Grothendieck's Large Structures}}

\bigskip
\subsection {Overview}

\bigskip
\strut
From a foundational perspective, what one has from Wiles' proof is that
\[ \text{ZFC + U} \proves \text{Fermat's Last Theorem} \ ,\]
i.e., ZFC + U proves Fermat's Last Theorem,
where ZFC denotes Zermelo-Fraenkel set theory together with the Axiom of Choice
and U denotes Grothendieck's Axiom of Universes.

Although Wiles does not explicitly invoke Grothendieck Universes, McLarty 
(\cite{McLarty2010}, page 362) has shown that Wiles references Mazur (\cite{Mazur1977},
\S II.3) but that therein Mazur does not give complete proofs but rather references
Grothendieck and Dieuddonn\'e (\cite{GrothendieckDieudonne1961}), which is devoted to
Grothendieck Universes.

Grothendieck wanted a Universe to be a set that was ``large enough that the habitual operations of set theory do not go outside" it (\cite{SGA1} VI.1, page 146). Grothendieck (\cite{SGA4}, vol. I, page 196)
gave a proof that, in ZFC, being a Grothendieck universe was the same as being the
set $V_\kappa$ of sets of rank less than $\kappa$ for some uncountable, strongly inaccessible 
cardinal $\kappa$.

A cardinal $\kappa$ is strongly inaccessible if (i) it is not the union of 
$<\kappa$ sets each of cardinality $<\kappa$, i.e., it is its own cofinality,
and (ii) if $x$ is a set of
cardinality $< \kappa$, then its power set also has cardinality $<\kappa$.

But if $\kappa$ is an uncountable, strongly inaccessible cardinal,
then $V_\kappa$ is a model of ZFC.  Thus, by G\"odel's Incompleteness 
Theorem, ZFC cannot prove the existence of
an uncountable, strongly inaccessible cardinal much less such a $V_\kappa$, because then ZFC, 
if it is consistent, would prove its own consistency.

Thus, Grothendieck's Axiom of Universes, 
which asserts that every set is a member of some Grothendieck
Universe is  equivalent to the large cardinal axiom that every cardinal is less than some
uncountable, strongly inaccessible cardinal.

No one has believed that Grothendieck universes 
are essential to the proof of Fermat's Last Theorem;
rather everyone has believed that, with enough work, one could reformulate the proofs to avoid
Grothendieck universes.  But prior to McLarty's paper, no one had.

\bigskip
\subsection{How are Grothendieck Universes used?}

\bigskip
Grothendieck needed his universes to be sets for several reasons:
\begin{enumerate}
\item to be able, using the ZFC axiom of replacement, to prove Theorem 1.10.1
of (\cite{Grothendieck1957a} 
that, in a universe, if an Abelian Category satisfies his Axiom AB5 and
admits a generator (for instance, $R$-modules over a commutative ring with unit),
then 
every object in it can be embedded in an injective object (i.e., there are ``enough injectives'');
\item to develop within a universe  the concept of derived categories; and
\item to be able within a universe to quantify over sets whose ranks were several levels
above the categories or schemes of interest in order to define and work with their cohomologies.
\end{enumerate}

To grasp better the first of these, it would be instructive to look at Grothendieck's proof
of Theorem 1.10.1 in \cite{Grothendieck1957a}, an English translation of which is available.

However, a better source for most would be the online
``The Stacks Project''  \hfil\nl (http://stacks.math.columbia.edu, Chapter 19,
Section 11).  (``The Stacks Project'' is an online, open source textbook and reference work
on algebraic stacks and the algebraic geometry needed for them.)  
Theorem 10.11.7 there proves Grothendieck's theorem on the existence of enough injectives and
also shows that the embedding into injectives is functorial, i.e., that there is a functor
${\bf M}$ such that, for each object $N$ in the category, ${\bf M}(N)$ is injective.  The results
and proofs there lay out the inductive systems that are needed and whose existence
requires the axiom of replacement.  There are also detailed discussions of the cardinality
and cofinality considerations in the proof.  The proofs themselves do not require the
existence of strongly inaccessible cardinals, but if one wants to do them within a universe
of sets, then the necessity of the axiom of replacement does require the existence of 
uncountable, strongly inaccessible cardinals in order to have a universe.

In order to see the role of replacement in a simpler setting, one can turn to the proof
of the existence of infinite injective resolutions:

\begin{thm} Let $\mathbf{A}$ be an abelian category satisfying Axiom AB5 and admitting
a generator.  Let ${\bf M}$ be the aforementioned functor that embeds each object into
an injective object.  Then for each object $N$, there is an infinite injective resolution
$$N \rightarrowtail M_0 \rightarrowtail M_1 \rightarrowtail M_2 \cdots$$
where each $M_i$ is injective and each arrow is a monomorphism.
\end{thm}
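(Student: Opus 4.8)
The plan is to build the resolution by iterating the functor $\mathbf{M}$ together with the cokernel construction, and then to observe that the only nontrivial ingredient in passing from the finite stages to the completed infinite diagram is the axiom of replacement. First I would construct the initial stage: since $\mathbf{M}$ supplies for each object a monomorphism into an injective object, I set $M_0 = \mathbf{M}(N)$ with its monomorphism $\iota_0 \colon N \rightarrowtail M_0$, and form the cokernel $C_0 = \operatorname{coker}(\iota_0)$ with projection $\pi_0 \colon M_0 \twoheadrightarrow C_0$. The hypotheses AB5 and the existence of a generator enter only insofar as they guarantee, via the theorem on enough injectives quoted earlier, that such an $\mathbf{M}$ is available; here $\mathbf{M}$ is handed to us, so these conditions are inherited rather than re-derived.

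Next I would describe the recursive step. Given the object $C_i$ produced at stage $i$, I set $M_{i+1} = \mathbf{M}(C_i)$, let $\iota_{i+1} \colon C_i \rightarrowtail M_{i+1}$ be the given monomorphism, and define the connecting map $d_i \colon M_i \to M_{i+1}$ as the composite $\iota_{i+1} \circ \pi_i$; I then take $C_{i+1} = \operatorname{coker}(d_i)$ with its projection. Because $\mathbf{M}$ is a functor and the formation of cokernels is canonical in an abelian category, each step is a single well-defined operation $G$ carrying the data at stage $i$ to the data at stage $i+1$, and the hooked arrows in the displayed resolution are accounted for by the factorisation $d_i = \iota_{i+1}\circ\pi_i$ through a monomorphism. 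Routine diagram-chasing then shows that the resulting sequence $N \rightarrowtail M_0 \to M_1 \to M_2 \to \cdots$ is exact, with each $M_i$ injective because it lies in the image of $\mathbf{M}$; this verification is the mechanical part of the argument.

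The crux --- and the entire point of isolating this statement --- is the step that produces the \emph{completed} infinite family $\{M_i\}_{i \in \omega}$ as a single set-theoretic object. What the recursion furnishes directly is, for each $i$, a well-defined stage; but the ranks of the objects $M_i$ grow without any a priori bound, so no fixed $V_\alpha$ is known in advance to contain them all, and the Axiom of Separation therefore cannot extract the sequence from a set we already possess. The genuine obstacle is thus to legitimise the recursion theorem over $\omega$ for the class function $i \mapsto M_i$, and this is exactly where the Axiom of Replacement is indispensable: replacement guarantees that the image of $\omega$ under $i \mapsto M_i$ is a set, hence that the infinite resolution exists as a bona fide object of the ambient set theory rather than merely as an unbounded schema of finite approximations. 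I would conclude by remarking that everything else is elementary, so that this single appeal to replacement --- and not any large cardinal --- is precisely the set-theoretic strength the construction consumes.
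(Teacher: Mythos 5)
You have correctly isolated the one point this theorem exists to make: the recursion itself is elementary, and the only genuine set-theoretic content is the appeal to the Axiom of Infinity and the Axiom of Replacement to collect the class $\{M_i : i < \omega\}$ into a single set. That is precisely the paper's proof, which consists of two lines: define $M_0 = \mathbf{M}(N)$ and $M_{n+1} = \mathbf{M}(M_n)$, then invoke Infinity and Replacement to conclude $\exists x\,[x = \{M_i : i < \omega\}]$. Where you diverge is in the categorical construction. The paper simply iterates the functor on the objects themselves, so every arrow in the chain is literally the monomorphism supplied by $\mathbf{M}$ --- which is what the statement asserts. You instead build the standard exact injective resolution by interposing cokernels, setting $M_{i+1} = \mathbf{M}(C_i)$ with $C_i = \operatorname{coker}$ and $d_i = \iota_{i+1}\circ\pi_i$. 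That is the more useful object homologically, but $d_i$, being an epimorphism followed by a monomorphism, is not in general a monomorphism, so as written your construction does not deliver the chain of monomorphisms $N \rightarrowtail M_0 \rightarrowtail M_1 \rightarrowtail M_2 \cdots$ that the statement literally claims; your remark that the hooked arrows are ``accounted for by the factorisation'' glosses over this mismatch. If you want the statement exactly as stated, drop the cokernels and iterate $\mathbf{M}$ directly. Either way, the set-theoretic punchline --- that Replacement, and no large cardinal, is what the completed infinite family consumes, and that this is exactly the step unavailable in MacLane set theory --- is the same in both arguments, and your identification of it is exactly right.
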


\begin{proof}
Define by induction $M_0 = {\bf M}(N)$ and $M_{n+1}={\bf M}(M_i)$.
Then $$ ZFC \proves \exists x [ x = \{M_i: i < \omega\}]$$ via the Axiom of Infinity
and the Axiom of Replacement.
\end{proof}

\bigskip
\subsection{McLarty's Strategy}
McLarty's strategy for side-stepping the aforementioned issues with Grothendieck Universes
is to develop all of Grothendieck's tools, both small and large, within a weakened set theory
together with a superstructure of simple types on top of the set theory. 
His thesis is that all
of the results in algebraic geometry  and algebraic number theory
that use any of Grothendieck's tools
can be carried through in this new foundation.

\bigskip
\subsection {Set Theories:  Zermelo-Fraenkel with Choice and MacLane Set Theory with Choice}

In order to obtain a set that is a ``universe'', i.e.,
that is closed under all of the operations of the set theory,
McLarty shifts from Zermelo-Fraenkel set theory with the Axiom of Choice to a weaker
set theory -- MacLane set theory with the Axiom of Choice, introduced by Saunders MacLane
(\cite{MacLaneMoerdijk1992}).
Zermelo set theory lies between these two set theories.
Here is a concise comparison of their axioms:

\begin{small}
\begin{center}
\begin{tabular}{r l|c|c|c|}
Axiom&Formula&Zermelo&Zermelo&McLarty's\\
&&-Fraenkel&$+$ Choice&MacLane\\
&&$+$ Choice&&$+$ Choice\\
\hline
Extensionality:&$[x=y \bicond \forall t [t\in x \bicond t \in y]]$&
$\checkmark$&$\checkmark$&$\checkmark$\\
\hline
Null set:&$x \notin \emptyset$&$\checkmark$&$\checkmark$&$\checkmark$\\
\hline
Pair:&$\forall x \forall y\exists z[x\in z \wedge y\in z \wedge \forall t[t\in z \cond t=x \vee t=y]]$
&$\checkmark$&$\checkmark$&$\checkmark$\\
\hline
Union:&$\forall x \exists z \forall t[t\in z \bicond \exists y [t\in y \wedge y \in x]]$
&$\checkmark$&$\checkmark$&$\checkmark$\\
&i.e., $z = \cup x$&&&\\
\hline
Power Set:&$\forall x \exists z \forall t[t\in z \bicond t \subseteq x]$
&$\checkmark$&$\checkmark$&$\checkmark$\\
&i.e., $z = \mathcal{P}(x)$&&&\\
\hline
Infinity& $\exists x(x=\omega)$
&$\checkmark$&$\checkmark$&$\checkmark$\\
\hline
Choice:&$\forall x(\forall y \in x(\exists z (z\in y))$ &$\checkmark$&$\checkmark$&$\checkmark$\\
&\quad $\cond \exists f(f \text{ is a function } \wedge \forall y \in x( f(y) \in y)))$&&&\\
\hline
Replacement Schema&$\forall u \forall v \forall w[\psi(u,v) \wedge \psi(u,w) \cond v = w] $
&$\checkmark$&&\\
&$\strut\quad\cond \forall z \exists y \forall v[v\in y \bicond (\exists u \in z) \psi(u,v))]$&&&\\
&where no free occurrences of $y$, $z$, and $w$ in $\psi(u,v)$&&&\\
\hline
Unbounded Separation&$\forall z \exists y\forall x[x\in y \bicond x \in z \wedge \phi(x)]$
&&$\checkmark$&\\
&where no free occurrences of $y$ in $\phi(x)$&&&\\
\hline
Bounded Separation&$\forall z \exists y\forall x[x\in y \bicond x \in z \wedge \phi(x)]$
&&&$\checkmark$\\
&where no free occurrences of $y$ in $\phi(x)$&&&\\
&and every quantifier in $\phi(x)$ is of the form&&&\\
&$\forall u \in v$ or $\exists u \in v$ and $v$ is distinct from $x$&&&\\
\hline
Foundation schema:&$\exists x \phi(x) \cond \exists x[\phi(x) \wedge (\forall y \in x)\neg \phi(y)]$
&$\checkmark$&&\footnote{Although MacLane included Foundation in his version of MacLane set theory, McLarty omits it because it is not
needed for his work on Grothendieck universes.}\\
&where $y$ is not free in $\phi(x)$&&&\\
\hline
\end{tabular}
\end{center}
\end{small}

Neither Zermelo set theory nor MacLane set theory can give the above proof for the existence
of injective resolutions.  In the next section, we will see how McLarty gets around this problem
in MacLane set theory.

\bigskip
\strut

\bigskip
\subsection {Sketch of Categories and Cohomology in MacLane Set Theory and Proofs concerning Injectives}

\bigskip
\strut

The primary problem/challenge with working in MacLane set theory is that, without replacement,
one cannot merely write down an infinitary, inductive definition and then treat the class
thereby defined as if it were a set.  In the earlier discussion, the expression
$$\{M_i : i < \omega\}$$
defines a class, but the class may not be a set, i.e., there may not be any set $a$
such that $$a = \{M_i: i < \omega\}\ .$$

In MacLane set theory, any expression that involves an index ranging over a possibly
 infinite set,
e.g., $\{X_i: i\in I\}$ where I might be an infinite set,
is suspect and must be treated with great care.
In order to show that it is a set, one must show that it can be obtained via the Axiom
of Bounded Separation.  This requires two things:
\begin{enumerate}
\item a formula $\phi(x)$ that does not contain any free occurrences of a variable $y$ 
and in which every quantifier is bound, i.e., of the form $\forall u \in v$ or
$\exists u \in v$, and
\item an ambient set $b$
\end{enumerate} such that the desired set is
$$\{x : x \in b \wedge \phi(x)\}\ .$$

So, to obtain a set of sets indexed by an index set $I$, one needs first to have
a set $X$ that already
contains all of the elements that one wants to have in the indexed sets.
Then one takes a function $s: X \rightarrow I$.  Then one defines, for each $i$ 
in $I$,
$$X_i = \{x : x \in X \wedge s(x) = i\} \ .$$
One still does not have the set $\{X_i: i \in I\}$, but one can skillfully use the function $s$.

McLarty, with great care, which we must gloss over,
shows that all of the fundamental concepts and tools of 
cohomology on small categories can be developed in MacLane set theory:
\begin{enumerate}
\item Indexed sets of small categories and of functions between them;
\item Diagrams and presheaves;
\item Natural transformations of presheaves;
\item Indexed sets of presheaves and indexed sets of natural transformation between them;
\item Indexed limits and colimits for indexed sets of presheaves on small categories;
\item The Yoneda Lemma;
\item Coverages on small categories;
\item Small sites, i.e., a small category with a coverage;
\item J-sheaves on small sites;
\item All theorems of elementary tops theory for sheaves over a small site;
\item The Functorality of presheaves and the standard six functions;
\item Comma categories;
\item \'Etale covers and fundamental groups
\item The Yoneda Lemma;
\item Coverages on small categories;
\item Small sites, i.e., a small category with a coverage;
\item J-sheaves on small sites;
\item All theorems of elementary tops theory for sheaves over a small site;
\item The Functorality of presheaves and the standard six functions;
\item Comma categories; and
\item \'Etale covers and fundamental groups.
\end{enumerate}
For the most part, the standard proofs go through after one has made fairly 
straight forward adjustments for the requirements of bounded separation.

But the situation changes drastically when one come to the existence of enough injectives
and of infinite injective resolutions.

The proof of the existence of enough injectives for Abelian Categories, with its essential
use of replacement, cannot be carried over to MacLane set theory.

Instead, one shifts attention to rings and modules and sheaves of rings and modules.

Without using replacement, one can prove that an abelian group is injective in the category of 
abelian groups if and only it is divisible, and one can prove, again without replacement, that
every abelian group can be embedded in a divisible group.

Next, using a result due to Kan -- if a functor ${\bf F:  B \longrightarrow A}$ has
a left exact left adjoint ${\bf G : A \longrightarrow B}$ with monic unit and each object
in ${\bf B}$ embeds in an injective, then so does each object in ${\bf A}$ -- one can 
bootstrap to the result that, for any ring $R$, every $R$-module embeds in an injective.

This is described in most textbooks on Homological Algebra.

But there is no easy route to infinite injective resolutions.

McLarty proceeds as follows:

\begin{quote}
\begin{proof}
Define sequences $I_i$ and $M_i$ inductively:
\begin{enumerate}
\item Set $M_0 = M$.
\item Embed $M_i$ as an additive group into a divisible group $M_i \rightarrowtail M_{di}$ .
\item Form the injective $R$-module $I_i = Hom_Z (R, M_{di} )$ with monic $M_i \rightarrowtail  I_i$ .
\item Start again, with the quotient $M_{i+1} = I_i/M_i$.
\end{enumerate}
Textbooks immediately conclude there are infinite injective resolutions, by implicit use
of (countable) replacement. MacSet [i.e., MacLane set theory] proves the same conclusion,
but only after bounding
the infinite procedure inside one ambient set for each module $M$.

The ambient will be the function set $M^{\mathbb{Z}\times R^n}$ which has an $R$-module structure induced
by $M$. Here $R^{\mathbb{N}}$ is the set of infinite sequences in $R$. Say a function
$f : \mathbb{Z} \times R^{\mathbb{N}} \rightarrow M$ is cut
off at $n \in \mathbb{N}$ if $f(m, \sigma) = 0$ for every sequence $\sigma$ which does not have
$\sigma(i) = 0$ for all 
$i \ge n$. In effect a function cut off at $n$ is an element of $M^{\mathbb{Z}\times R^N}$.
So, a function cut off at
$n + 1$ can also be regarded as a function from R to the set $M^{\mathbb{Z}\times R^n}$ 
of functions cut off at $n$.
Also, notice Step 2 is idle for $i \ge 1$ since all $I_i$ and all $M_{i+1} = I_{i+1}/I_i$ are divisible
groups. So it suffices to give an infinite injective resolution for each module $M$ with
divisible underlying group. For this case $M_i = M_{di}$ for all $i \in N$.
For any ring $R$, and $R$-module $M$ with divisible underlying group, define this induction
parallel to the one above:

\begin{enumerate}
\item[(1')] Let the subset $N_0 \subset M^{R^{\mathbb{N}}}$ contain just the additive functions
cut off at 0. In effect
these are additive functions $Z \rightarrow M$, so $N_0\cong M$.

\item[(1'')] Define equivalence relation $E_0$ as the identity on $N_0$ . The point is
$$M \cong  N 0 \cong N_0/E_0 .$$
\item[(3')] Given the subset $N_i \subset M^{R^{\mathbb{N}}}$ with every function cut off at $i$,
and equivalence
relation $E_i$ on it, define a certain subset $J_i \subset M^{R^{\mathbb{N}}}$  of functions
which are cut
off at $i + 1$. Namely, think of these as functions $R \rightarrow M^{\mathbb{Z}\times R^n}$. 
Let $Ji$ contain just
those whose values all lie in $N_i$ and which are additive when seen as functions
$R \rightarrow N_i /E_i$ . Let $Q_I$ be the pointwise equivalence relation making functions
$R \rightarrow N_i$ equivalent iff they are equal as functions $R \rightarrow N_i /E_i$ .
\item[(3'')] There is a natural monic $h : N_i \rightarrowtail J_i$ where 
for each $g \in N_i$ the value $h(g)$ is the
unique $R$-linear function $R \rightarrow N_i /E_i$ taking $1 \in R$ to $g$.
\item[(4')] Define $N_{i+1} = J_i$ with $E_{i+1}$ the smallest equivalence relation containing 
both $Q_i$ and the relation induced by the submodule $h : N_i \rightarrowtail  J_i$.
\end{enumerate}
For every $i \in \mathbb{N}$ the quotient $N_i /E_i$ is isomorphic as $R$-module to the
module $M_i$ above,
while each $J_i/Q_i$ is isomorphic to $I_i$ above, So this gives an isomorphic copy of the 
resolution by $I_i$ above. Bounded separation suffices to show this infinite resolution is one set,
since $M^{\mathbb{Z}\times R^{\mathbb{N}}}$ suffices as ambient set, and quantifier bounds
are explicit in the steps of the induction.
\end{proof}
\end{quote}

McLarty then proves 
\begin{thm} For any sheaf of rings ${\mathbf R}$  on any site $({\bf C , J})$, every
sheaf of ${\mathbf R}$-modules ${\mathbf M}$ has infinite sheaf resolutions.
\end{thm}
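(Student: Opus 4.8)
The plan is to reproduce, one level up, the two-stage pattern used just now for modules: first manufacture a \emph{functorial} embedding of each sheaf of $\mathbf{R}$-modules into an acyclic sheaf of $\mathbf{R}$-modules, and then iterate that functor while trapping the entire infinite tower inside a single ambient set, so that bounded separation delivers the resolution as one set. Everything rests on two facts already in hand: the site $(\mathbf{C},\mathbf{J})$ is \emph{small}, so that every product or colimit indexed by its objects or morphisms is indexed by an honest set; and the elementary topos theory of sheaves on a small site — sheafification, indexed limits and colimits of presheaves, and the standard functors — has already been rebuilt inside MacLane set theory, so each of these is available as a \emph{bounded} operation.

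First I would build the embedding functor $\mathbf{T}$. For each object $U$ the sections $\mathbf{M}(U)$ form an $\mathbf{R}(U)$-module, which by the module case just proved embeds functorially in an injective $\mathbf{R}(U)$-module. I would assemble these objectwise embeddings into a single sheaf-level embedding by the Kan criterion quoted above, using the evaluation-and-sheafification adjunctions: the right adjoints to the (exact) evaluation and restriction functors carry module-injectives to acyclic sheaves, the product over the set of objects of $\mathbf{C}$ of these is again acyclic, and the unit $\mathbf{M}\rightarrowtail \mathbf{T}(\mathbf{M})$ is monic because a section of a sheaf is determined by its restrictions. Because the module embedding is functorial and the evaluation, product, and sheafification functors are all functorial, the composite $\mathbf{T}$ is a functor on sheaves of $\mathbf{R}$-modules.

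With $\mathbf{T}$ in hand I would mirror the module induction verbatim: set $\mathbf{M}_0=\mathbf{M}$, let $\mathbf{I}_i=\mathbf{T}(\mathbf{M}_i)$ with its monic unit $\mathbf{M}_i\rightarrowtail\mathbf{I}_i$, and put $\mathbf{M}_{i+1}=\mathbf{I}_i/\mathbf{M}_i$, producing the tower $\mathbf{M}\rightarrowtail \mathbf{I}_0\rightarrowtail \mathbf{I}_1\rightarrowtail\cdots$ of acyclic sheaves. As in the module case, this recursion a priori only defines a class, not a set; the whole content of the theorem in MacLane set theory is to exhibit the sequence $\{\mathbf{I}_i:i<\omega\}$ as a single set.

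The hard part will be exactly this bounding. I would build one ambient sheaf from the objectwise module ambients: for each object $U$ the preceding proof already places the \emph{entire} infinite module resolution of $\mathbf{M}(U)$ inside $\mathbf{M}(U)^{\mathbb{Z}\times \mathbf{R}(U)^{\mathbb{N}}}$, and the product of these over the small set of objects of $\mathbf{C}$, cut down by the bounded conditions that record the restriction maps of $\mathbf{R}$ and $\mathbf{M}$ and the sheaf axiom, is the candidate ambient $\mathbf{S}$. The danger is that each application of $\mathbf{T}$ invokes a product over the morphisms into each object together with a sheafification, so a careless reading lets the ambient grow at every stage of the induction; the crux is to verify, as McLarty does objectwise for modules, that the objectwise ambient $\mathbf{M}(U)^{\mathbb{Z}\times \mathbf{R}(U)^{\mathbb{N}}}$ already absorbs every stage — each $\mathbf{M}_i(U)$ and $\mathbf{I}_i(U)$ being isomorphic to a bounded-definable piece of it — and that the site-indexed assembly and the sheafification introduce no unbounded quantifier, precisely because $\mathbf{C}$ is small and sheafification is a bounded operation. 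Once this uniform bound is checked, $\mathbf{S}$ serves as the ambient set, the quantifier bounds are explicit at each step of the induction, and bounded separation collects the resolution as one set.
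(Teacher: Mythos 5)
The paper itself gives no proof of this theorem: it simply states that McLarty proves it and moves on (``One can then do cohomology''), so there is nothing in the text to compare your argument against line by line. Your overall strategy --- replay the two-stage module argument one level up, using the evaluation/pushforward adjunctions and the Kan-type lemma to get a functorial embedding into acyclic (indeed injective) sheaves, then trap the iterated tower in one ambient set so that bounded separation collects it --- is the natural one and is consistent with how the module case was handled.

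However, there is a genuine gap, and it sits exactly where the theorem's content lies in MacLane set theory. The ambient you name, the product over objects $U$ of $\mathbf{M}(U)^{\mathbb{Z}\times \mathbf{R}(U)^{\mathbb{N}}}$, is already too small at stage zero: the sheaf-level injective $\mathbf{T}(\mathbf{M})$ evaluated at $U$ is not built from $\mathbf{M}(U)$ alone but from a product over \emph{all} objects $V$ (and the morphisms $U\to V$) of the objectwise injectives, followed by sheafification over the covers in $\mathbf{J}$. So $\mathbf{I}_0(U)$ lives in something like $\prod_V\bigl(\mathbf{M}(V)^{\mathbb{Z}\times\mathbf{R}(V)^{\mathbb{N}}}\bigr)^{\mathrm{Hom}(U,V)}$, and at the next stage the same construction is applied to $\mathbf{M}_1=\mathbf{I}_0/\mathbf{M}$, whose objectwise ambients are themselves built from the stage-zero ambient. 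On the face of it the ambient therefore grows with every application of $\mathbf{T}$, which is precisely the failure mode that bounded separation cannot tolerate. You flag this as ``the crux'' and then assert that the objectwise ambient ``already absorbs every stage,'' but that assertion is the theorem; nothing in your sketch shows it. What is needed is the analogue of McLarty's cut-off-function trick performed globally: a single function set assembled from $\mathbf{C}$, $\mathbf{J}$, $\mathbf{R}$, and $\mathbf{M}$ at the outset, together with an explicit bounded formula whose $i$-th instance carves out an isomorphic copy of the $i$-th stage, so that no stage ever refers to a set constructed at an earlier stage. Until that uniform encoding is exhibited, the induction defines only a class and the proof is not complete.
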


One can then do cohomology.

\bigskip
\subsection {Sketch of Simple Type Theory over MacLane Set Theory and Its Use and Consequences}
\strut

\bigskip
Whereas the first part of McLarty's paper could be viewed as a technical undertaking to circumvent
the problem of needing strongly inaccessible cardinals, the second part is a more radical
undertaking.  In order to meet the needs of geometers to work with and to quantify over classes
that are not sets, McLarty aims to bring together two streams of foundations that have been apart 
for a century.

In response to the discovery of Russell's antinomy, two groups emerged.  The set theorists aimed
to avoid the problem by axiomatizing which operations would lead from sets to sets without
causing any contradictions.  This led to today's Zermelo-Fraenkel set theory.

The other group, the type theorists, aimed to avoid the problem by assigning a type to each
collection so that a collection would have a higher type than each of its members.  There would 
be no set of all sets, because the collection of all sets would be of a type higher than sets.

McLarty aims to bring these two, sometimes competing, streams back together by building a simple
theory of types on top of MacLane set theory.  By doing this carefully, he is able to provide 
the Grothendieck's tools for ``large'' categories and a ``universe'' for those tools within 
a theory that is a conservative extension of MacLane set theory.

\bigskip
\subsection{McLarty's Simple Type Theory}

\bigskip
McLarty follows the path laid out by Takeuti (\cite{Takeuti1978},\cite{Takeuti1987}).
He refers to his type theory as ``MacClass''.

There is a linear hierarchy of types:
\begin{itemize}
\item There is a ground type ${\mathbf Sets}$.
\item For every type $\tau$, there is a type $[\tau]$.
\end{itemize}

So $[{\mathbf Set}]$ is the type of classes of sets, and $[[{\mathbf Set}]]$
is the type of classes of classes of sets.  For suggestive convenience,
McLarty denotes these as
$$ \mathbf{Class} = [\mathbf{Set}] \qquad \text{and} \qquad \mathbf{Collection} =
[\mathbf{Class}]\ .$$

The terms and formulas are defined by a simultaneous induction.

\begin{itemize}
\item Terms
\begin{itemize}
\item Any term of MacLane set theory is a term of type $\mathbf{Set}$.  The language
will include constant symbols $\emptyset$ and $\mathbb{N}$ and function symbols
$\cup$, $\times$, $\mathbf{P}$.  It may also include bounded set abstractions of the
form $\{x : \phi(x)\}$ where all quantifiers in $\phi$ are bounded.
\item Variables of any type are terms of that type.
\item For any formula $\Psi(\mathbf{v})$ with $\mathbf{v}$ and any free variables being
of type $\tau$ and with no quantifiers except possibly over variables of type $\mathbf{Set}$,
the formula $\{v : \Psi(v) \}$ is a term of type $[\tau]$.  The expression 
$\{v : \Psi(v) \}$ will be referred to as a \textit{set theoretic abstract}.
\end{itemize}
\item Formulas
\begin{itemize}
\item Formulas of MacLane set theory are formulas of MacClass.
\item For terms $t_1$ and  $t_2$ of type $\tau$ and and $t_3$  of type $[\tau]$,  the 
formulas $t_1 = t_2$ and $t_1 \in t_3$ are formulas of MacClass.
\item If $A$ and $B$ are formulas of MacClass, then $(\neg A)$, $(A \wedge B)$, $(A \vee B)$,
$(A \implies B)$, $\forall x A(X)$, and $\exists x A(x)$ are formulas of MacClass.
\end{itemize}
\end{itemize}

\begin{itemize}
\item Axioms:  The axioms of MacClass are the axioms of MacLane Set Theory.
\item Proofs:
\begin{itemize}
\item May use the axioms of MacLane Set Theory.
\item The formulas $t_1 \in \{v_1 : \Psi(v_1)\}$ and $\Psi(t_1)$
for any formula $\Psi(v_1)$ and term $t_1$ of the same type as $v_1$ imply each other.
\item The standard natural deduction rules for logical connectives.
\item The standard rules for $\exists$.
\end{itemize}
\end{itemize}

A formula is said to be \definedterm{set theoretic} if it only quantifies over sets, i.e.,
set variables., but it may include terms of any type.  It follows from the above axioms and rules 
of proof that for any set-theoretic formula $\Psi(v_1)$, which may have free variables other
than $v_1$, it is provable that
$$\exists a \left( v_1 \in a \bicond \Psi(v_1)\right)\ .$$

For more details, see \cite{Takeuti1987}.

Because MacClass uses only set-theoretic formulas in abstractions $\{ v_1 : \Psi(v_1)\}$,
Gentzen cut elimination shows the MacClass is conservative over MacLane set theory
(\cite{Takeuti1987}, page 176).
MacClass can quantify over classes in proofs, but it cannot quantify over classes in definitions 
of sets and classes.

To facilitate reading formulas, McLarty adopts the following conventions:
\begin{itemize}
\item Variables of type $\mathbf{Set}$ will be denoted by math italics such as $x$ and $A$.
\item Variables of type $\mathbf{Class} = [\mathbf{Set}]$ will be denoted by caligraphic
letters $\mathcal{A}$, $\mathcal{B}$, etc.
\item Variables of type $\mathbf{Collection} = [\mathbf{Class}] = [[\mathbf{Set}]]$ will be
denoted by fraktur letters $\mathfrak{A}$, $\mathfrak{B}$, etc.
\end{itemize}

Relation symbols may be subscripted to indicate typing.  For instance
\begin{enumerate}
\item $A \in_0 B$ indicates that $A$ is a set that is a member of the set $B$.
\item $A \in_1 \mathcal{A}$ indicates that the set $A$ is a member for the class $\mathcal{A}$.
\item $\mathcal{A} \in_2 \mathfrak{A}$ indicates that the class $\mathcal{A}$ is a member of the collection $\mathfrak{A}$
\item $\mathcal{A} \subseteq_1 \mathcal{B} \bicond \forall x(x\in_1 \mathcal{A} \cond x \in_1 \mathcal{B})$
defines inclusion for classes.
\item $\mathfrak{A} \subseteq_1 \mathfrak{B} \bicond \forall x(x\in_1 \mathfrak{A} \cond x \in_1 \mathfrak{B})$
defines inclusion for collections.
\item $A \subseteq_{01} \mathcal{B} \bicond \forall x(x\in_0 A \cond x \in_1 \mathcal{B})$ defines when a set $A$
is a subclass of of the class $\mathcal{B}$.
\end{enumerate}

Note that every set $A$ defines a class $\mathcal{A}$ and a collection $\mathfrak{A}$ with the same elements:
\begin{align*}
\forall x (x\in_0 A &\bicond x \in_1 \mathcal{A})\\
\forall x (x\in_0 A &\bicond x \in_2 \mathfrak{A})\\
\end{align*}

In these situations, one may say informally that $\mathcal{A}$ and $\mathfrak{A}$ are sets.

\bigskip
\subsection {Categories and the Universe}

\bigskip
After going through a bit more detail, one can show that there is a class $\mathcal{Cat}$
consisting of all small categories and there is a collection $\mathfrak{Cat}$
consisting of all categories that are classes.

Finally, one takes as the \definedterm{Universe $\mathcal{U}$} the class of all sets.

This provides the universe that Grothendieck wanted.

\bigskip
\subsection{Next Steps}
McLarty  goes on to develop all of the large concepts and tools within MacClass,
where they have the ``meaning'' that was intended by Grothendieck and other geometers.

\bigskip
\section{Conclusion}
McLarty has developed a foundation for algebraic geometry consisting of MacLane Set Theory
and his MacLane Class Theory that provides
\begin{enumerate}
\item a set theory that is adequate for dealing with small categories without invoking any
large cardinal assumption and
\item a class theory that is adequate for the large categories and toposes used by geometers.
\end{enumerate}
Moreover, the class theory is a conservative extension of the set theory.

\bigskip
\end{Large}

\bigskip
\bigskip

\bibliography{McLarty}{}

\begin{thebibliography}{10}

\bibitem{SGA4}
M.~Artin, A.~Grothendieck, and J.~L. Verdier.
\newblock {\em Th\`eorie des tops et cohomologie etal des sch\`emas}, volume~4
  of {\em S\`eminaire de G\`eom\'etrie Alg\'ebrique du Bois-Marie}.
\newblock Springer-Verlag, 1972.

\bibitem{BreuilConradDiamondTaylor2001}
Christophe Breuil, Brian Conrad, Fred Diamond, and Richard Taylor.
\newblock On the mudularity of elliptic curves over {$Q$}: Wild 3-adic
  exercises.
\newblock {\em Journal of the American Mathematical Society}, 14(4):843--939,
  2001.

\bibitem{Cipra1994}
B.~Cipra.
\newblock A truly remarkable proof.
\newblock In {\em What is happening in the Mathematical Sciences}, volume~2,
  pages 3--7. American Mathematical Society, 1994.

\bibitem{ConradDiamondTaylor1999}
Brian Conrad, Fred Diamond, and Richard Taylor.
\newblock Modularity of certain potentially {Barsotti-Tate} {Galois}
  representations.
\newblock {\em Journal of the American Mathematical Society}, 12(2):521--567,
  1999.

\bibitem{Darmon1999}
Henri Darmon.
\newblock A proof of the full {Shimura-Taniyama-Weil} conjecture is announced.
\newblock {\em Notices of the AMS}, 46(11):1397--1401, 1999.

\bibitem{Deligne1974}
P.~Deligne.
\newblock La conjecture de {Weil}, i.
\newblock {\em Publ. Math. IHES}, 43:273--307, 1974.

\bibitem{Diamond1996}
Fred Diamond.
\newblock On deformation rings and {Hecke} rings.
\newblock {\em Annals of Mathematics}, 144(1):137--166, 1996.

\bibitem{Dirichlet1837}
Peter Gustav~Lejeune Direchlet.
\newblock Beweis des satzes, dass jede unbegrenzte arithmetische progression,
  deren erstes glied und differenz ganze zahlen ohne gemeinschaftlichen factor
  sind, unendlich viele primzahlen enthält.
\newblock {\em Abhand.Ak. Wiss. Berlin}, 48, 1837.

\bibitem{Dwork1960}
B.~Dwork.
\newblock On the rationality of the zeta function of an algebraic variety.
\newblock {\em Amer.~J.~Math.}, 82:631--648, 1960.

\bibitem{Euler1737}
Leonhard Euler.
\newblock {\em Variae Observationes circa Series Infinitas}.
\newblock PhD thesis, St. Petersburg Academy, 1737.

\bibitem{Frey1982}
Gerhard Frey.
\newblock Rationale punkte auf fermatkurven und getwisteten modulkurven.
\newblock {\em J. Reine Angew. Math.}, 331:185--191, 1982.

\bibitem{Gauss358}
Carl~Friedrich Gauss.
\newblock {\em Article 358}, volume~I of {\em Werke}, pages 445--449.
\newblock April 1 1796.

\bibitem{GaussBQR}
Carl~Friedrich Gauss.
\newblock {\em First Memoir on Biquadratic Residues}, volume~II of {\em Werke},
  pages 67--92.
\newblock 1801.

\bibitem{Grothendieck1957a}
A.~Grothendieck.
\newblock Sur quelques points d'alg\`dbre homologique.
\newblock {\em T\~ohoku Mathematical Journal}, 9:119--221, 1957.

\bibitem{Grothendieck1958}
A.~Grothendieck.
\newblock The cohomology theory of abstract algebraic varieties.
\newblock In {\em Proc.~Int.~Cong.~Math., Edinburgh}, pages 103--118, 1958.

\bibitem{Grothendieck1965}
A.~Grothendieck.
\newblock Formule de lefschetz et rationalit\'e des fonctions $l$.
\newblock Number 279 in S\'eminaire Bourbaki. 1965.

\bibitem{SGA1}
A.~Grothendieck.
\newblock {\em Rev\^tements \^Etales et Groupe Fundamental}, volume~4 of {\em
  S\`eminaire de G\`eom\'etrie Alg\'ebrique du Bois-Marie}.
\newblock Springer-Verlag, 1971.

\bibitem{GrothendieckDieudonne1961}
A.~Grothendieck and J.~Dieudonne\'e.
\newblock {\em \'El\'ements de g\'eom\'etrie alg\'ebrique III: \'Etuden
  cohomologique des faisceaux coh\'erents}.
\newblock Number~11 in Publications Math\'ematiques. Institut des Hautes
  \'Etudes Scientifiques, Paris, 1961.

\bibitem{Kleiner2000}
Israel Kleiner.
\newblock From {Fermat} to {Wiles}: {Fermat}'s last theorem becomes a theorem.
\newblock {\em Elemente der Mathematik}, 55:19--37, 2000.

\bibitem{MacLaneMoerdijk1992}
I.~MacLane, S.and~Moerdijk.
\newblock {\em Sheaves in Geometry and Logic}.
\newblock Springer-Verlag, New York, 1992.

\bibitem{Mazur1977}
B.~Mazur.
\newblock Modular curves and the {Eisenstein} ideal.
\newblock {\em Publicationes Math\'ematiques, Institut des Hutes \'Etudes
  Scientifiques}, 47:133--186, 1977.

\bibitem{McLarty2020}
Colin McLarty.
\newblock The large structures of {G}rothendieck founded on finite-order
  arithmetic.
\newblock {\em The Review of Symbolic Logic}, {13}({2}):{296--325}, {2020}.

\bibitem{McLarty2010}
Colin McLarty.
\newblock What does it take to prove {Fermat}'s last theorem? {Grothendieck}
  and the logic of number theory.
\newblock {\em The Bulletin of Symbolic Logic}, 16(3):359--377, September 2010.

\bibitem{RibetHayes1994}
K.~A. Ribet and B.~Hayes.
\newblock {Fermat}'s last theorem and modern arithmetic.
\newblock {\em American Scientist}, 82(82):144--156, March--April, 1994.

\bibitem{Ribet1990}
Kenneth Ribet.
\newblock On modular representations of {Gal}$(\widetilde{Q}/{Q})$ arising from
  modular forms.
\newblock {\em Inventiones Mathematicae}, 100(2):431--476, 1990.

\bibitem{Riemann1837}
Bernhard Riemann.
\newblock Über die anzahl der primzahlen unter einer gegebenen grösse.
\newblock {\em Monatsberichte der Berliner Akademie}, 1837.

\bibitem{Takeuti1978}
G.~Takeuti.
\newblock A conservative extension of {Peano} arithmetic.
\newblock In {\em Two Applications of Logic to Mathematics}, pages 77--135.
  Princeton, Princeton, NJ, 1978.

\bibitem{Takeuti1987}
G.~Takeuti.
\newblock {\em Proof Theory}.
\newblock Elsevier Science, Ltd., New York, second edition, 1987.

\bibitem{TaylorWiles1995}
Richard Taylor and Andrew Wiles.
\newblock Ring theoretic properties of certain {Hecke} algebras.
\newblock {\em Annals of Mathematics}, 151:553--572, 1995.

\bibitem{Weil1948}
Andre Weil.
\newblock {\em Sur les Courbes Alg\'briques et les Vari\'et\'es qui s'en
  D\'eduisent}.
\newblock Hermann, Paris, 1948.

\bibitem{Weil1949}
Andre Weil.
\newblock Number of solutions of equations over finite fields.
\newblock {\em Bull. Amer. Math. Soc.}, 55(5):497--508, 1949.

\bibitem{Wiles1995}
Andrew Wiles.
\newblock Modular elliptic curves and {Fermat}'s last theorem.
\newblock {\em Annals of Mathematics}, 141:443--551, 1995.

\end{thebibliography}
\bibliographystyle{plain}
\end{document}